\newtheorem{lem}{Lemma}
\newtheorem{thm}[lem]{Theorem}
\newtheorem{prop}[lem]{Proposition}
\newtheorem{cor}[lem]{Corollary}
\newtheorem{rem}[lem]{Remark}
\newtheorem{conj}[lem]{Conjecture}
\def\RR{\mathbb{R}}
\def\vol{{\rm Vol}}
\def\conv{{\rm conv}}
\def\intr{{\rm int}}
\def\supp{{\rm supp}}
\begin{document}
\title {On unbalanced difference bodies and Godbersen's conjecture}
\author{S. Artstein-Avidan and E. Putterman}
\date{18 May 2025}
\maketitle

\begin{abstract}
    The longstanding Godbersen's conjecture states that for any convex body $K \subset \mathbb R^n$ of volume $1$ and any $j \in \{0, \ldots, n\}$, the mixed volume $V_j = V(K[j], -K[n - j])$ is bounded by $\binom
    {n}{j}$, with equality if and only if $K$ is a simplex. We demonstrate that several consequences of this conjecture are true: certain families of linear combinations of the $V_j$-s, arising from different geometric constructions, are bounded above by their values when one substitutes $\binom{n}{j}$ for $V_j$, with equality if and only if $K$ is a simplex. One of our results implies that for any $K$ of volume $1$ we have $\frac{1}{n + 1} \sum_{j = 0}^n \binom{n}{j}^{-1} V_j \le 1$, showing that Godbersen's conjecture holds ``on average'' for any convex body. Another result generalizes the well-known Rogers-Shephard inequality for the difference body.
\end{abstract}

\section{Introduction}
A convex body $K \subset \RR^n$ is a compact convex set with non-empty interior. 
For compact convex sets $K_1, \ldots,K_m \subset {\mathbb R}^n$, and non-negative real numbers $\lambda_1, \ldots,\lambda_m$, a classical result of Minkowski states that the volume of $\sum \lambda_i K_i$ is a homogeneous polynomial of degree $n$ in $\lambda_i$,
\begin{equation}\label{Eq_Vol-Def}
\vol \left(\sum_{i=1}^m \lambda_i K_i\right) = 
\sum_{i_1,\dots,i_n=1}^m \lambda_{i_1}\cdots\lambda_{i_n} V(K_{i_1},\dots,K_{i_n}). 
\end{equation}
The coefficient $V(K_{i_1},\dots,K_{i_n})$, which depends solely on $K_{i_1}, \ldots, K_{i_n}$, is called the mixed volume of $K_{i_1}, \ldots, K_{i_n}$.
The mixed volume is a non-negative, translation-invariant function, monotone with respect to set inclusion, invariant under permutations of its arguments, and positively homogeneous in each argument. For $K$ and $L$ compact and convex, we denote by $V(K[j], L[n-j])$ the mixed volume of $j$ copies of $K$ and $(n-j)$ copies of $L$. 
One has $V(K[n]) = \vol(K)$. By Alexandrov's inequality, $V(K[j],-K[n-j])\ge \vol(K)$, with equality if and only of $K= x_0-K$ for some $x_0 \in \mathbb R^n$, that is, if and only if $K$ is centrally symmetric. 
For further information on mixed volumes and their properties, see Section \textsection 5.1 of  \cite{schneider2014book}. 

A well-known inequality of Rogers and Shephard \cite{rogers1957difference,rogers1958convex} states that for any convex body $K \subset \mathbb R^n$, one has
$$\vol(K - K) \le \binom{2n}{n} \vol(K),$$
with equality if and only if $K$ is an $n$-simplex. Throughout the paper, we denote 
by $\Delta$ an $n$-simplex, which is arbitrary unless further specified. 

Expanding the left-hand side using \eqref{Eq_Vol-Def}, this can be rewritten as
\begin{equation}\label{eq:rs_mvol} \sum_{j=0}^n  {\binom{n}{j}} V(K[j], -K[n-j]) \le \binom{2n}{n} \vol(K).
\end{equation}
One can show \cite[\S 4]{rogers1957difference} that  $V(\Delta[j], -\Delta[n - j]) = \binom{n}{j} \vol(\Delta)$ for all $j \in \{0, \ldots, n\}$; substituting in \eqref{eq:rs_mvol} and noting that $\sum_{j = 0}^n \binom{n}{j}^2 = \binom{2n}{n}$, this shows that equality holds in \eqref{eq:rs_mvol} if $K = \Delta$.

In particular, one sees from the above line of argument that the Rogers-Shephard inequality would follow immediately if it were true that  $V(K[j], -K[n - j]) \le \binom{n}{j} \vol(K)$ for all $j$ and all $K$. This was conjectured in 1938 by Godbersen \cite{godbersen1938satz} (and independently by Hajnal and Makai Jr. \cite{Hajnal1974}):

\begin{conj}[Godbersen's conjecture]\label{conj:god}
For any convex body $K\subset \RR^n$ and any $1\le j\le n-1$,  
\begin{equation}\label{eq:Godbersen-conj} V(K[j], -K[n-j])\le \binom{n}{j} \vol(K),\end{equation}
with equality if and only if $K$ is an $n$-simplex.
\end{conj}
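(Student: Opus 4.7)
The plan is to combine three ingredients: the trivial and known endpoint cases, an Alexandrov--Fenchel-type reduction, and a careful analysis of the equality case. For $j = 0, n$ the statement reduces to $\vol(K) = \vol(-K)$. For $j = 1$ (and by symmetry $j = n-1$) I would use the integral representation
\[ V(K,-K[n-1]) = \frac{1}{n}\int_{S^{n-1}} h_{-K}(u)\,dS_K(u), \]
translate so that the centroid of $K$ lies at the origin (which leaves all mixed volumes unchanged), apply the classical inclusion $-K \subseteq nK$ valid when $K$ has centroid at the origin, hence $h_{-K}(u) \le n\, h_K(u)$, and integrate; the right-hand side becomes $n\cdot\vol(K) = \binom{n}{1}\vol(K)$, and equality in the pointwise bound on $\supp(S_K)$ forces $K$ to be a simplex.

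For intermediate $1 < j < n-1$ I would attempt an iterative swap procedure: rewrite
\[ V(K[j],-K[n-j]) = \tfrac{1}{n}\int h_{-K}\, dS(K[j],-K[n-j-1]), \]
then try to replace successive copies of $-K$ in the mixed area measure by copies of $K$, at a cost controlled by a ratio of binomial coefficients. To make each swap rigorous one would invoke an Alexandrov--Fenchel inequality together with the boundary estimates already established, bootstrapping upward from $j = 1$ and, by symmetry, downward from $j = n-1$ to meet in the middle. Throughout one would track the equality case to ensure that a chain of equalities propagates back to $K$ being a simplex.

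The main obstacle --- and the reason Godbersen's conjecture has remained open since 1938 --- is that the simplex saturates the bound for \emph{every} $j$ simultaneously, so any reduction step that loses even a bounded multiplicative factor will fail precisely in the extremal case. Alexandrov--Fenchel gives the log-concavity $V_j^2 \ge V_{j-1}V_{j+1}$, which points in the wrong direction for upper bounds: combined with $V_0 = V_n = \vol(K)$ it yields only Alexandrov's \emph{lower} bound $V_j \ge \vol(K)$. A successful reduction would therefore require an exact identity --- not merely an inequality --- at each step, and such identities appear to exist only after averaging over $j$, which is presumably why the present paper pursues bounds on linear combinations of the $V_j$ rather than on individual $V_j$.
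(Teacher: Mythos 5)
This statement is Godbersen's conjecture itself, which remains open; the paper neither proves it nor claims to. The entire paper is devoted to establishing weighted and averaged \emph{consequences} of it (Theorem~\ref{theorem-on-sum}, Corollary~\ref{cor-average-uniform}, Theorem~\ref{thm:gen_rs}, Theorem~\ref{thm:unb_rs_45}) precisely because the individual bounds $V(K[j],-K[n-j]) \le \binom{n}{j}\vol(K)$ are not known for $1 < j < n-1$ and general $K$. There is no ``paper's own proof'' of this statement to compare against.

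Your proposal is also not a proof, and you essentially concede as much. The parts that work are the known cases: $j \in \{0,n\}$ is trivial, and $j \in \{1, n-1\}$ follows from the inclusion $-K \subseteq nK$ for $K$ with centroid at the origin, exactly as you write --- this is precisely how the paper's introduction describes the state of the art, citing \cite{schneider2009stability}. The middle step --- the ``iterative swap procedure'' replacing copies of $-K$ by $K$ in the mixed area measure at a cost controlled by binomial ratios, ``made rigorous'' via Alexandrov--Fenchel --- is not an argument: no such swap inequality exists, and you identify the obstruction yourself in your final paragraph. Alexandrov--Fenchel gives the log-concavity $V_j^2 \ge V_{j-1}V_{j+1}$, which with $V_0 = V_n = \vol(K)$ yields only the \emph{lower} bound $V_j \ge \vol(K)$ (Alexandrov's inequality, already quoted in the paper's introduction) and nothing in the upper direction. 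Since the simplex saturates $V_j = \binom{n}{j}\vol(K)$ for every $j$ simultaneously, any lossy reduction fails exactly in the extremal case, and no lossless ``exact identity at each step'' is available. Your concluding paragraph is a correct and insightful diagnosis of why the conjecture is hard and why the paper retreats to proving bounds on linear combinations of the $V_j$ --- but a diagnosis of why a proof does not exist is not a proof, and the statement should be treated as the conjecture it is labelled.
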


Godbersen \cite{godbersen1938satz} proved the conjecture for certain classes of convex bodies, in particular for those of constant width. Additionally, it is known that \eqref{eq:Godbersen-conj} holds for $j=1,n-1$ by the inclusion $K\subset n(-K)$ for bodies $K$ with center of mass at the origin, an inclusion which is tight for the simplex \cite{schneider2009stability}. The first author proved with Sadovsky and Sanyal that Godbersen's conjecture holds on the class of anti-blocking bodies \cite{ASS2023}; this result was later improved by Sadovsky \cite{sadovsky2024}, who showed that Godbersen's conjecture holds on the larger class of locally anti-blocking bodies (see the cited papers for the definitions of anti-blocking and locally anti-blocking bodies).

For $j$ other than $1, n - 1$, the best known bound on $V(K[j], -K[n -j])$ for general $K$ follows from a result of the first author with Einhorn, Florentin and Ostrover \cite{AEFO2015}, who showed that for any $\lambda \in [0,1]$ and for any convex body $K$ one has
\begin{equation}\label{eq:aaefo}
\lambda^j (1-\lambda)^{n-j} V(K[j], -K[n-j])\le {\vol(K)}.
\end{equation}
In particular, picking $\lambda = \frac{j}{n}$, we obtain
\[V(K[j], -K[n - j])\le \frac{n^n}{j^j (n-j)^{n-j}}\vol(K) \sim \binom{n}{j} \sqrt{2\pi \frac{j(n-j)}{n}} \vol(K). \] 

As Godbersen's conjecture seems out of reach in general for now, it seems reasonable to search for additional consequences of the conjecture which might be more tractable. We propose the following natural conjecture, which generalizes the Rogers-Shephard inequality to the \textit{unbalanced} difference body 
$$D_\lambda K = (1 - \lambda) K + \lambda (-K).$$ 

\begin{conj}[Unbalanced Rogers-Shephard] \label{conj:unbalancedRS}
	For any $\lambda\in (0,1)$ one has
\begin{equation}\label{eq:unb_rs0} \frac{\vol(D_\lambda K)}{\vol(K)} \le  \frac{\vol(D_\lambda \Delta)}{\vol(\Delta )}.
\end{equation}
	Moreover, equality holds if and only if $K$ is an $n$-simplex.
\end{conj}

If $\lambda = \frac{1}{2}$, then $D_\lambda K$ is just a scaling of the difference body $K - K$ and Conjecture \ref{conj:unbalancedRS} reduces to the Rogers-Shephard inequality. On the other hand, expanding both sides of \eqref{eq:unb_rs} in mixed volumes immediately yields that Conjecture \ref{conj:unbalancedRS} follows from Godbersen's conjecture. 

Explicitly, Conjecture \ref{conj:unbalancedRS} asks whether the following inequality holds for all $\lambda \in (0, 1)$:
\begin{equation}\label{eq:unb_rs}
\sum_{j=0}^n \binom{n}{j} \lambda^j (1-\lambda)^{n-j} V_j \le \sum_{j=0}^n \binom{n}{j}^2 \lambda^j (1-\lambda)^{n-j},  
\end{equation}
where we have denoted $V_j = V(K [j], -K[n-j])/\vol(K)$. 

Although we were unable to prove Conjecture \ref{conj:unbalancedRS} in general, we can show the following:

\begin{thm}\label{thm:unb_rs_45}
	For $n\le 5$, Conjecture \ref{conj:unbalancedRS} holds.
\end{thm}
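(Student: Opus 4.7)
The plan rests on the observation that the polynomial $g(\lambda) := \vol(D_\lambda K)/\vol(K)$ is symmetric about $\lambda = \tfrac{1}{2}$: since $-D_\lambda K = \lambda K + (1-\lambda)(-K) = D_{1-\lambda}K$ and volume is invariant under negation, $g(\lambda) = g(1-\lambda)$ as polynomials in $\lambda$. Comparing coefficients of $\lambda^j(1-\lambda)^{n-j}$ yields the identity $V_j = V_{n-j}$ for all $j$. Equivalently, both $g$ and the analogous polynomial $g_\Delta$ for the simplex can be rewritten as polynomials in the single variable $s := \lambda(1-\lambda) \in [0,\tfrac{1}{4}]$, of degree $\lfloor n/2\rfloor$, both taking the value $1$ at $s = 0$.

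Expressed in $s$, the difference $g_\Delta(\lambda) - g(\lambda)$ therefore factors as $s\cdot Q(s)$ with $\deg Q = \lfloor n/2\rfloor - 1$, and Conjecture \ref{conj:unbalancedRS} is equivalent to $Q(s) \ge 0$ on $(0,\tfrac{1}{4}]$. For $n \le 5$ the degree of $Q$ is at most $1$, so $Q$ is affine (or constant) in $s$, and its non-negativity on $[0,\tfrac{1}{4}]$ reduces to non-negativity at the two endpoints $s = 0$ and $s = \tfrac{1}{4}$.

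A direct expansion identifies the endpoint $s = 0$ (that is, $\lambda \to 0$) with the condition $V_1 \le n$, which is Godbersen's conjecture for $j = 1$---a known case, with equality if and only if $K$ is a simplex. The endpoint $s = \tfrac{1}{4}$ corresponds to $\lambda = \tfrac{1}{2}$, where $D_{1/2}K = (K-K)/2$, so $g(\tfrac{1}{2}) = 2^{-n}\vol(K-K)/\vol(K)$ and $g_\Delta(\tfrac{1}{2}) = 2^{-n}\binom{2n}{n}$; hence $Q(\tfrac{1}{4}) \ge 0$ is precisely the Rogers-Shephard inequality \eqref{eq:rs_mvol}, again with equality if and only if $K$ is a simplex. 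Both inequalities are classical, so the conjecture holds for $n \le 5$. For the equality characterization at a given $\lambda \in (0,1)$, one uses that if the affine function $Q$ is non-negative on $[0,\tfrac{1}{4}]$ and vanishes at some $s \in (0,\tfrac{1}{4}]$, then either the zero lies at $s = \tfrac{1}{4}$ (direct equality in Rogers-Shephard) or $Q$ vanishes identically (again giving $Q(\tfrac{1}{4}) = 0$); either way, $K$ is forced to be a simplex.

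The main---and really only---obstacle is conceptual: spotting the symmetry of $g$ and the resulting substitution $s = \lambda(1-\lambda)$, which halves the effective polynomial degree. This also explains why the argument works precisely up to $n = 5$: for $n \ge 6$ the reduced polynomial $Q$ has degree at least $2$ in $s$, and endpoint checks no longer certify non-negativity throughout $[0,\tfrac{1}{4}]$; controlling the interior behavior would require genuinely new information beyond Godbersen for $j = 1$ and Rogers-Shephard.
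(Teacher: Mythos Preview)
Your argument is correct and uses the same two ingredients as the paper---the case $j=1$ of Godbersen's conjecture and the Rogers--Shephard inequality---combined linearly using the symmetry $V_j = V_{n-j}$. The paper carries this out by hand for $n=4$ and $n=5$ separately: it groups the symmetric terms (e.g.\ for $n=5$, setting $\alpha = \lambda(1-\lambda)^4 + \lambda^4(1-\lambda)$ and $\beta = \lambda^2(1-\lambda)^3 + \lambda^3(1-\lambda)^2$), observes that $\alpha \ge \beta$, and then exhibits the required inequality as an explicit non-negative combination of $V_1 \le n$ and the Rogers--Shephard bound. Your substitution $s = \lambda(1-\lambda)$ is a cleaner packaging of the same idea: it makes the symmetry manifest, reduces the problem uniformly to checking non-negativity of an affine function $Q(s)$ on $[0,\tfrac14]$, and immediately explains why the method succeeds precisely through $n=5$ (since $\deg Q = \lfloor n/2\rfloor - 1$). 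The equality analysis is also slightly smoother in your version, as vanishing of an affine non-negative function at an interior point forces it to vanish identically. So the two proofs are mathematically equivalent, but your formulation is more conceptual and avoids the case-by-case computation.
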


As Godbersen's conjecture holds in dimension $n\le 3$, the novelty here is in dimensions $4$ and $5$.

Along with Theorem \ref{thm:unb_rs_45}, we prove two additional results, Theorems \ref{theorem-on-sum} and \ref{thm:gen_rs} below. Both results show that certain linear combinations of the normalized mixed volumes $V_j = V(K[j], -K[n - j])/\vol(K)$ of a convex body, similar to the combination in \eqref{eq:unb_rs}, are maximized precisely when $K$ is a simplex. Hence, both theorems would follow immediately from Godbersen's conjecture. What is less obvious is that Theorems \ref{theorem-on-sum} and \ref{thm:gen_rs} would also follow from the weaker Conjecture \ref{conj:unbalancedRS}. 

We now give the details. Our next result is an improvement of \eqref{eq:aaefo}:

\begin{thm}\label{theorem-on-sum}
For any convex body $K\subset \RR^n$ and any $\lambda \in [0,1]$ it holds that  
\begin{equation}\label{eq:thm_mvol_avg}
\sum_{j=0}^n \lambda^j (1-\lambda)^{n-j} V(K[j], -K[n-j])\le {\vol(K)},
\end{equation}
with equality if and only if $K$ is an $n$-simplex.
\end{thm}

The proof of Theorem \ref{theorem-on-sum} uses a geometric construction of a $(2n + 1)$-dimensional body whose projections and sections are related to Minkowski sums and to intersections, respectively, of certain families of dilates of $K$ and $-K$. This construction was introduced by Rogers and Shephard \cite{rogers1958convex}. 

As we stated above, \eqref{eq:thm_mvol_avg} would follow immediately from Godbersen's conjecture, by the binomial theorem. 

Theorem \ref{theorem-on-sum} has several interesting consequences. First, integration with respect to the parameter $\lambda$ yields:

\begin{cor}\label{cor-average-uniform} For any convex body $K\subset \RR^n$ it holds that 
\[ \frac{1}{n+1}
\sum_{j=0}^n \frac{V(K[j],-K[n-j])}{\binom{n}{j}}\le \vol(K),\] 
or equivalently
\[ \frac{1}{n-1}
\sum_{j=1}^{n-1} \frac{V(K[j],-K[n-j])}{\binom{n}{j}}\le \vol(K).\]
\end{cor}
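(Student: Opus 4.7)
The plan is to apply Theorem \ref{theorem-on-sum} and then integrate both sides of \eqref{eq:thm_mvol_avg} with respect to $\lambda$ over $[0,1]$, since the right-hand side is independent of $\lambda$ while the left-hand side has a particularly clean integral against $d\lambda$. Concretely, I would start by writing
\[ \int_0^1 \sum_{j=0}^n \lambda^j (1-\lambda)^{n-j} V(K[j],-K[n-j]) \, d\lambda \le \int_0^1 \vol(K) \, d\lambda = \vol(K), \]
interchange sum and integral (the sum is finite, so no justification is needed), and recognize the remaining integrals as Beta integrals: $\int_0^1 \lambda^j (1-\lambda)^{n-j} \, d\lambda = B(j+1, n-j+1) = \frac{j!\,(n-j)!}{(n+1)!} = \frac{1}{(n+1)\binom{n}{j}}$.

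Substituting this identity in gives the first inequality of the corollary directly. For the equivalent reformulation, I would isolate the two boundary terms $j=0$ and $j=n$, both of which equal $\vol(K)$ (since $V(K[0], -K[n]) = \vol(-K) = \vol(K)$ and $V(K[n], -K[0]) = \vol(K)$, while $\binom{n}{0} = \binom{n}{n} = 1$). Pulling them to the right-hand side yields
\[ \sum_{j=1}^{n-1} \frac{V(K[j],-K[n-j])}{\binom{n}{j}} \le (n+1)\vol(K) - 2\vol(K) = (n-1)\vol(K), \]
which is the second inequality after dividing by $n-1$.

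There is no real obstacle here: once Theorem \ref{theorem-on-sum} is in hand, the whole argument is a one-line calculation using the standard Beta function identity, followed by bookkeeping to separate the trivial terms. The only minor point to verify is that the Beta integral reproduces exactly the weight $\binom{n}{j}^{-1}/(n+1)$ appearing in the statement, which is immediate from $B(a,b) = \Gamma(a)\Gamma(b)/\Gamma(a+b)$.
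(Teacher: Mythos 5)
Your proof is correct and is exactly the paper's approach: the paper introduces the corollary with the words ``integration with respect to the parameter $\lambda$ yields,'' which is precisely your Beta-integral computation. The reformulation by separating the $j=0$ and $j=n$ terms is also the intended meaning of ``equivalently'' in the statement.
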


So for any convex body $K$, Godbersen's conjecture for $K$ is true ``on average'' over all the mixed volumes. Of course, the fact that it holds true on average was known before, but with a different kind of average. Indeed, the Rogers-Shephard inequality \eqref{eq:rs_mvol} for the difference body can be rewritten as
\[ \frac{1}{\binom{2n}{n}} \sum_{j=0}^n  {\binom{n}{j}} V(K[j], -K[n-j]) \le \vol(K).\]

The new average in Corollary \ref{cor-average-uniform} is a uniform one, which gives us additional information. It implies, for instance, that the median of the sequence $( {\binom{n}{j}}^{-1}{V(K[j], -K[n-j])})_{j=1}^{n-1}$ is less than two, so that at least for one half of the indices $j=1,2,\ldots, n-1$, the mixed volumes satisfy Godbersen's conjecture up to a factor of $2$. More generally, apply Markov's inequality for the uniform measure on $\{1, \ldots, n-1\}$ to obtain:

\begin{cor}
	 For any convex body $K\subset \RR^n$  with $\vol(K)=1$, for at least $k$ of the indices $j=1,2,\ldots n-1$ it holds that \[  
	V(K[j],-K[n-j]) \le \frac{n-1}{n-k} \binom{n}{j}.\]
\end{cor}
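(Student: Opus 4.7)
The plan is to deduce the statement from Corollary \ref{cor-average-uniform} by Markov's inequality, just as the sentence preceding the statement suggests. Set $f(j) := \binom{n}{j}^{-1} V(K[j], -K[n-j])$ on $\{1, 2, \ldots, n-1\}$; the second form of the preceding corollary, combined with $\vol(K) = 1$, states precisely that $\sum_{j=1}^{n-1} f(j) \le n-1$.

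I will then bound the size of the ``bad'' set $B := \{j \in \{1, \ldots, n-1\} : f(j) > \tfrac{n-1}{n-k}\}$. If $B$ is empty there is nothing to prove, as every index is already good. Otherwise, $\sum_{j \in B} f(j) > |B| \cdot \tfrac{n-1}{n-k}$ strictly, and since $\sum_{j \in B} f(j) \le \sum_{j=1}^{n-1} f(j) \le n-1$, we obtain $|B| < n-k$, hence $|B| \le n-k-1$ by integrality. The complement of $B$ in $\{1, \ldots, n-1\}$ therefore contains at least $(n-1) - (n-k-1) = k$ indices, which is exactly the desired conclusion.

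The argument is essentially one line and presents no real obstacle; the only subtlety worth flagging is the use of the strict inequality $f(j) > \tfrac{n-1}{n-k}$ in the definition of $B$ together with the integrality of $|B|$, which is what allows one to reach $k$ good indices instead of merely $k-1$ that a naive application of Markov at threshold $\tfrac{n-1}{n-k}$ would yield.
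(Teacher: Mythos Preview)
Your argument is correct and follows exactly the route the paper indicates: apply Markov's inequality for the uniform measure on $\{1,\ldots,n-1\}$ to the nonnegative function $f(j)=\binom{n}{j}^{-1}V(K[j],-K[n-j])$, using the bound $\sum_{j=1}^{n-1} f(j)\le n-1$ from Corollary~\ref{cor-average-uniform}. Your observation about the strict inequality in the definition of $B$ together with the integrality of $|B|$ is precisely what is needed to obtain $k$ good indices rather than $k-1$; the only tacit fact you use, that $f(j)\ge 0$ so that $\sum_{j\in B} f(j)\le \sum_{j=1}^{n-1} f(j)$, is immediate from nonnegativity of mixed volumes.
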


Also, using the fact that $V(K[n]) = V(-K[n]) = \vol(K)$ and dividing through by $\lambda$, one can reformulate \eqref{eq:thm_mvol_avg} as
\[\sum_{j=1}^{n-1} \lambda^{j-1} (1-\lambda)^{n-j-1} \left[\frac{V(K[j], -K[n-j])}{\vol(K)} -\binom{n}{j}\right] \le 0,\]
so that by taking $\lambda=0, 1$ we see, once again, that 
 $V(K, -K[n-1]) = V(K[n-1],-K) \le n \vol(K)$; that is, our result includes the (known) case $j = 1,n-1$ of Godbersen's conjecture.

Finally, we state our third theorem, which is a generalization of the Rogers-Shephard inequality:

\begin{thm}\label{thm:gen_rs}  For any convex body $K\subset \RR^n$ and any $\lambda \in [0, 1]$  it holds that  
\begin{equation}\label{eq:gen_rs_eq}
\sum_{m = 0}^n \frac{(n!)^2}{(2n - m)! m!} (1 - \lambda)^m \lambda^{n - m} \sum_{j = 0}^{n - m} \binom{n - m}{j} V(K[n - j], -K[j]) \le \vol(K). 
\end{equation}
For any $\lambda$, equality is attained if and only if $K$ is an $n$-simplex.
\end{thm}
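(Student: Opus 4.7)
The plan is to reduce Theorem \ref{thm:gen_rs} to a mixed-volume extension of the Rogers--Shephard inequality, applied term-by-term and then collapsed via the binomial theorem. The first step is an algebraic simplification: by the multilinearity of mixed volumes, expanding each copy of $K-K = K + (-K)$ in the $n-m$ free slots yields
\[
V(K[m], (K-K)[n-m]) = \sum_{j=0}^{n-m} \binom{n-m}{j} V(K[n-j], -K[j]),
\]
so the inner sum in \eqref{eq:gen_rs_eq} is exactly $V(K[m], (K-K)[n-m])$, and the left-hand side of the theorem becomes $\sum_{m=0}^n \tfrac{(n!)^2}{(2n-m)!m!}(1-\lambda)^m \lambda^{n-m}\, V(K[m], (K-K)[n-m])$.

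The main step is to establish, for each $0 \le m \le n$, the mixed-volume Rogers--Shephard inequality
\[
V(K[m], (K-K)[n-m]) \le \binom{2n-m}{n-m}\vol(K),
\]
with equality (for $m < n$) iff $K$ is an $n$-simplex. At $m=0$ this is the classical Rogers--Shephard inequality; at $m = n-1$, splitting $K-K$ gives $V(K[n-1], K-K) = \vol(K) + V(K[n-1], -K)$, so it reduces to the known case $j=1$ of Godbersen's conjecture. For intermediate $m$, I would try to prove it in the spirit of Rogers and Shephard's original argument, exploiting the $\tfrac{1}{n}$-concavity of $x \mapsto \vol(K \cap (x+K))$ on $K-K$ (Brunn--Minkowski) and applying a suitably weighted Rogers--Shephard lemma to account for the $m$ additional copies of $K$. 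I expect this to be the main technical obstacle, though it may alternatively be cited from Rogers and Shephard's 1958 sequel or from Schneider's book.

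Finally, the coefficient telescopes: $\tfrac{(n!)^2}{(2n-m)!m!} \binom{2n-m}{n-m} = \binom{n}{m}$. Applying the mixed Rogers--Shephard bound term-by-term and invoking the binomial theorem,
\[
\sum_{m=0}^n \binom{n}{m}(1-\lambda)^m \lambda^{n-m}\vol(K) = \bigl((1-\lambda) + \lambda\bigr)^n \vol(K) = \vol(K),
\]
which is \eqref{eq:gen_rs_eq}. For the equality case at $\lambda \in (0,1]$, all the binomial weights $\binom{n}{m}(1-\lambda)^m \lambda^{n-m}$ are strictly positive, forcing tightness in each individual mixed Rogers--Shephard bound, and hence $K$ to be an $n$-simplex. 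Conversely, for an $n$-simplex one has $V(K[n-j], -K[j]) = \binom{n}{j}\vol(K)$, so by the Vandermonde identity $\sum_j \binom{n-m}{j}\binom{n}{j} = \binom{2n-m}{n-m}$ every inequality becomes an equality.
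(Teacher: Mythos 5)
Your algebraic framing is correct: by multilinearity the inner sum is $V(K[m],(K-K)[n-m])$, and the coefficient identity $\frac{(n!)^2}{(2n-m)!\,m!}\binom{2n-m}{n-m}=\binom{n}{m}$ together with the binomial theorem would indeed collapse a term-by-term bound into \eqref{eq:gen_rs_eq}. The equality analysis is also fine (and correctly excludes $\lambda=0$, where the inequality degenerates).

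However, the step you flag as "the main technical obstacle" is a genuine gap, not a citation to be filled in. The asserted mixed-volume Rogers--Shephard inequality
\[
V(K[m],(K-K)[n-m])\ \le\ \binom{2n-m}{n-m}\vol(K)\qquad(0\le m\le n)
\]
is, to our knowledge, not in Rogers--Shephard's papers or in Schneider's book, and it is in fact a strictly \emph{stronger} statement than Theorem~\ref{thm:gen_rs} itself. Theorem~\ref{thm:gen_rs} only gives a $\lambda$-weighted average of the quantities $V(K[m],(K-K)[n-m])$, and because those weights are a fixed one-parameter family (not arbitrary nonnegative tuples), one cannot extract the individual terms from it. Worse, your term-by-term claim does not even follow from Conjecture~\ref{conj:unbalancedRS} (the unbalanced Rogers--Shephard conjecture), which the paper shows \emph{does} imply Theorem~\ref{thm:gen_rs}: if you try to realize the coefficients $\binom{n}{j}\lambda^j(1-\lambda)^{n-j}$ as a nonnegative combination $\sum_m w_m\binom{n-m}{j}$, you find $w_0=\lambda^n$ and $w_1=n\lambda^{n-1}(1-2\lambda)$, which is negative for $\lambda>\tfrac12$. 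So your proposed lemma sits strictly between Godbersen's conjecture (from which it would follow by Vandermonde) and the theorem you are trying to prove, and establishing it would require a new idea. The cases you do verify, $m=0$ (classical Rogers--Shephard) and $m=n-1$ (the known case $j=1$ of Godbersen), are exactly the two extreme cases and do not bound the interior.

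The paper avoids this by proving the full weighted sum at once: it introduces the unbalanced covariogram $f(x)=\vol((\lambda K+x)\cap K)/\vol(\lambda K)$, uses $\int f=\vol(K)$, bounds $f$ from below by $(1-d_B(x,K'))^n$ on $K'+B$ with $K'=(1-\lambda)K$, $B=\lambda(K-K)$ via $\tfrac1n$-concavity, and then evaluates $\int_{K'+B}(1-d_B(x,K'))^n\,dx$ exactly with the local Steiner formula. That computation produces precisely the sum on the left of \eqref{eq:gen_rs_eq} as a single package; no term-by-term inequality is available or needed. If you want to salvage your decomposition route, you would first have to prove the mixed-volume Rogers--Shephard inequality above, which would be a new result of independent interest rather than a lookup.
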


The Rogers-Shephard inequality \eqref{eq:rs_mvol} can be proven by analyzing the so-called covariogram function of $K$, defined via $g_K(x) = \vol(K \cap (x + K))$, which is supported on the difference body $K - K$. Similarly, Theorem \ref{thm:gen_rs} is proven by analyzing the function $g_{K, \lambda}(x) = \vol(K \cap (x + \lambda K))$, which is supported on the unbalanced difference body $K - \lambda K$. (In fact, it was in attempting to prove Conjecture \ref{conj:unbalancedRS} that we were led to Theorem \ref{thm:gen_rs}; though the unbalanced difference body does not appear in the statement of the result, it is crucial to the proof.) Obtaining sharp bounds on the integral of $g_{K, \lambda}$ requires introducing an additional ingredient, the local Steiner formula (equation \eqref{eq:loc_steiner} below), which to the best of our knowledge has not been previously used in proving Rogers-Shephard-type inequalities.

Note that writing $g(\lambda)$ for the LHS of \eqref{eq:gen_rs_eq}, we have $g(0) = \vol(K)$ and thus we must have $g'(0) \le 0$. Computing the derivative yields
$$-n\vol(K) + \frac{n}{n + 1} (\vol(K) + V(K[n - 1], -K[1])) \le 0,$$
i.e., $V(K[n - 1], -K[1]) \le n\vol(K)$, which again recovers the case $j = 1,n-1$ of Godbersen's conjecture.

As mentioned above, we shall show in section \ref{sec:unb_diff} that both our main results (Theorems \ref{theorem-on-sum} and \ref{thm:gen_rs}) would follow directly from Conjecture \ref{conj:unbalancedRS}, as the expressions on the right-hand sides of \eqref{eq:thm_mvol_avg} and \eqref{eq:gen_rs_eq} turn out to be integrals of $g(\lambda) = \vol(D_\lambda K)$ with respect to different measures on $[0, 1]$.


The structure of the paper is as follows. In section \ref{sec:unb_rs_45} we prove Theorem \ref{thm:unb_rs_45}. In section \ref{sec:rs_bodies} we prove Theorem \ref{theorem-on-sum}. In section \ref{sec:gen_rs} we prove Theorem \ref{thm:gen_rs}. The proof of the equality case of Theorem \ref{thm:gen_rs} uses a new characterization of the simplex, Proposition \ref{prop:hom_simp}, which may be of independent interest. In section \ref{sec:unb_diff} we show that Theorems \ref{theorem-on-sum} and \ref{thm:gen_rs} both follow from Conjecture \ref{conj:unbalancedRS}, and prove Theorem \ref{thm:unb_rs_45}. Finally, in section \ref{sec:extras} we give a simple geometric proof of an inequality which appeared earlier in \cite{AEFO2015}, using similar methods to those in the proof of Theorem \ref{theorem-on-sum}.

{\bf Acknowledgements:} We would like to thank Shay Sadovsky for helpful discussions, Maud Szusterman for providing the elegant proof of Proposition \ref{prop:hom_simp}, and the referee for carefully reading the paper and suggesting many improvements. Support for this work was provided by the ERC under the European Union’s Horizon 2020 research and innovation programme (grant agreement no. 770127), by ISF grant Number 784/20, and by the Binational Science Foundation (grant no. 2020329). In addition, part of this work was conducted while the second author was supported by a Chateaubriand Fellowship.

\section{Proof of Theorem \ref{thm:unb_rs_45}}\label{sec:unb_rs_45}

Fix a convex body $K \subset \RR^n$, and let $V_j = \frac{V(K[j], -K[n - j])}{\vol(K)}$. Recall that for any $j \in \{0, \ldots, n\}$, $V_{n - j} = V_j$. 

\begin{proof}[Proof of Theorem \ref{thm:unb_rs_45}]\mbox{}
We start with the case $n=4$. 
For  $K\subset \RR^4$, we have that $V_0 = V_4 = 1$ and $V_1 = V_3$. We wish to prove that for any $\lambda \in [0, 1]$ we have 
    $$\sum_{j = 0}^4  \binom{4}{j} \lambda^j (1 - \lambda)^{4 - j} V_j \le \sum_{j = 0}^4 \lambda^j (1 - \lambda)^{4 - j} \binom{4}{j}^2,$$
    which reduces to
	\[ 4(\lambda^3(1-\lambda)+\lambda(1-\lambda)^3)V_1 + 6\lambda^2 (1-\lambda)^2V_2 \le 
	16(\lambda^3(1-\lambda)+\lambda(1-\lambda)^3)  + 36\lambda^2 (1-\lambda)^2. \]
    As the case $\lambda \in \{0, 1\}$ is obvious, to prove the above inequality it is sufficient to divide through by $\lambda^2 (1 - \lambda)^2$ and prove the resulting inequality, 
    \[ 4\left(\frac{\lambda}{1 - \lambda}  + \frac{1 - \lambda}{\lambda}\right) V_1 + 6V_2 \le 
	16\left(\frac{\lambda}{1 - \lambda}  + \frac{1 - \lambda}{\lambda}\right)  + 36. \]
    Write $u = \frac{\lambda}{1 - \lambda} + \frac{1 - \lambda}{\lambda}$ and note that $u \ge 2$; we wish to prove that
    $$4u V_1 + 6 V_2 \le 16u + 36.$$
    The case $j = 1$ of Godbersen's conjecture implies $V_1 \le 4$, while the Rogers-Shephard inequality $\sum_{j = 0}^4 \binom{4}{j} V_j \le \binom{8}{4}$ reduces to $8V_1 + 6V_2 \le 68$. Multiplying the first inequality by $4(u - 2)$ and adding it to the second inequality we obtain
    $$4u V_1 + 6V_2 = 4(u - 2)V_1 + 8V_1 + 6V_2 \le 16(u - 2) + 68 = 16u + 36,$$
    which is precisely what we wished to prove. For the equality case, simply note that equality in both the Rogers-Shephard inequality and the inequality $V_1 \le 4$ is known to hold only if $K$ is a simplex.

\noindent   We turn to the case $n=5$, namely $K\subset \RR^5$. 
    The proof is exactly the same: we have $V_0 = V_5 = 1$, $V_1 = V_4$, $V_2 = V_3$, and the desired inequality reduces to 
    \begin{multline*} 5(\lambda^4(1-\lambda)+\lambda(1-\lambda)^4)V_1 + 10(\lambda^2 (1-\lambda)^3 + \lambda^3 (1 - \lambda)^2) V_2 \\ \le 
	25(\lambda^4(1-\lambda)+\lambda(1-\lambda)^4)  + 100 (\lambda^2 (1-\lambda)^3 + \lambda^3 (1 - \lambda)^2).
    \end{multline*}
    Write $\alpha = \lambda^4(1-\lambda)+\lambda(1-\lambda)^4$, $\beta = \lambda^2 (1-\lambda)^3 + \lambda^3 (1 - \lambda)^2$; we must show that
    \[5\alpha V_1 + 10 \beta V_2 \le 25\alpha + 100 \beta.\]
    One computes explicitly that $\alpha - \beta = \lambda (1 - \lambda) (1 - 2\lambda)^2 \ge 0$. Also, we know that $V_1 \le 5$, while the Rogers-Shephard inequality yields $5V_1 + 10 V_2 \le 125$. Adding $5(\alpha - \beta)$ times the first inequality to $\beta$ times the second, we obtain
    $$5\alpha V_1 + 10 \beta V_2 = 5(\alpha - \beta) V_1 + \beta (5 V_1 + 10 V_2) \le 25 (\alpha - \beta) + 125 \beta = 25\alpha + 100\beta,$$
    as desired. Again, the equality case follows immediately from the equality case in the Rogers-Shephard inequality.
\end{proof}

\section{Rogers-Shephard bodies and the proof of Theorem \ref{theorem-on-sum}}\label{sec:rs_bodies}

The proof of Theorem \ref{theorem-on-sum} proceeds via the consideration of two bodies, $C\subset \RR^{n+1}$ and  $T\subset \RR^{2n+1}$. Both were used in the paper of Rogers and Shephard \cite{rogers1958convex}. 

For $S \subset \mathbb R^n$, we use $\conv(S)$ to denote its convex hull. By imitating the methods of \cite{rogers1958convex}, we prove the following lemma:

\begin{lem}\label{lem:vol-of-C}
Given a convex body $K\subset \RR^n$ define $C\subset \RR\times \RR^n$ by 
\[ C = \conv (\{0\}\times(1-\lambda) K \cup \{1\}\times -\lambda K). \]
Then we have
\[ \vol(C)\le \frac{\vol(K)}{n+1},\]
and equality implies that $K$ is an $n$-simplex.
\end{lem}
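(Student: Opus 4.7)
I plan to imitate the Rogers--Shephard methodology of combining Cavalieri's principle with Brunn's theorem. Since $C$ is the convex hull of two convex bodies lying in parallel hyperplanes at unit distance, Cavalieri yields
\[ \vol(C) = \int_0^1 \vol(C_t)\,dt, \qquad C_t := (1-t)(1-\lambda)K + t\lambda(-K). \]
Writing $\alpha(t) := (1-t)(1-\lambda)$ and $\beta(t) := t\lambda$, each slice is the Minkowski combination $\alpha K - \beta K$; my goal is then to bound $\int_0^1 \vol(\alpha(t)K-\beta(t)K)\,dt$ by $\vol(K)/(n+1)$.

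Next I would exploit the convolution identity
\[ \int_{\alpha K - \beta K}\vol\bigl(\alpha K \cap (\beta K + x)\bigr)\,dx = \alpha^n\beta^n\vol(K)^2, \]
obtained by Fubini applied to the set $\{(u,v)\in\alpha K\times\beta K : v - u = x\}$, together with Brunn's theorem: the integrand is $(1/n)$-concave in $x$ on its support. A radial integration in the style of Rogers--Shephard's proof of the difference-body inequality then bounds the slice volume pointwise. The target factor $1/(n+1)$ should emerge when this estimate is integrated against $dt$ via the Beta integral $\int_0^1 t^j(1-t)^{n-j}\,dt = j!(n-j)!/(n+1)!$, which is the source of the cone-volume factor in $\RR^{n+1}$.

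The main obstacle I expect is that a naive slice-by-slice Rogers--Shephard bound yields only $\vol(C_t)\lesssim \binom{2n}{n}\max(\alpha,\beta)^n\vol(K)$, which is far too weak when integrated against $dt$. To recover the sharp bound, I plan to use a global argument coupling all slices in $\RR^{n+1}$---for instance by integrating the Brunn concavity directly against $dt$ rather than slice-by-slice, or by exhibiting a measure-preserving correspondence between $C$ and a cone over $K$ of volume $\vol(K)/(n+1)$. Once the main inequality is established, equality traces back through Brunn's theorem and forces the slice-intersection function to be a linear tent, which is well known to characterize $K$ as an $n$-simplex.
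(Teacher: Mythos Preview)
Your setup is correct and you rightly diagnose that the slice-by-slice Rogers--Shephard bound is too crude. But the proposal stops exactly where the actual content begins: you announce that a ``global argument coupling all slices'' is needed and offer two vague options, neither of which is carried out. The decisive observation you are missing is that your covariogram $G(t,x):=\vol\bigl(\alpha(t)K\cap(\beta(t)K+x)\bigr)$ is $\tfrac1n$-concave \emph{jointly} in $(t,x)\in\RR^{n+1}$, not merely in $x$ for each fixed $t$ (it is the $n$-volume of parallel $n$-sections of the convex set $\{(t,u,v):t\in[0,1],\,u\in\alpha(t)K,\,v\in\beta(t)K\}$). Its support is exactly $C$, its maximum $\lambda^n(1-\lambda)^n\vol(K)$ is attained at the single point $(1-\lambda,0)$, and $\iint G=\lambda^n(1-\lambda)^n\vol(K)^2\cdot(n!)^2/(2n+1)!$ by your convolution identity integrated in $t$. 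Running the Rogers--Shephard radial argument \emph{once} in $\RR^{n+1}$ then yields $\vol(C)\le\vol(K)/(n+1)$ on the nose. Without this joint concavity, ``integrating the Brunn concavity against $dt$'' is not a well-defined step, and there is no evident measure-preserving map to a cone over $K$.

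The paper's proof is this same argument in packaged form: it builds the auxiliary body $T\subset\RR^{2n+1}$ whose parallel $n$-sections have volume $G(t,x)$, then applies the Rogers--Shephard section/projection inequality $\vol(T\cap E)\,\vol(P_{E^\perp}T)\le\binom{2n+1}{n}\vol(T)$, which is precisely the $(1/n)$-concavity estimate above. One computes that $P_{E^\perp}T=C$ and that the section at $\theta_0=1-\lambda$ is $\lambda(1-\lambda)K$; the equality case then reduces to the classical homothetic-sections characterisation of the simplex. So your plan can be completed, but only after supplying the joint-concavity/lifting step, which is the heart of the matter.
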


With this lemma in hand, the proof of Theorem \ref{theorem-on-sum} is a simple computation:

\begin{proof}[Proof of Theorem \ref{theorem-on-sum}]
\begin{align*} \vol(C) =& \int_{0}^{1} \vol((1-\eta)(1-\lambda)K - \eta \lambda K) d\eta \\ =& 
 \sum_{j=0}^n \binom{n}{j}(1-\lambda)^{n-j}\lambda^j V(K[j],-K[n-j]) \int_{0}^{1}(1-\eta)^{n-j}\eta^j \, d\eta\\
 = & \frac{1}{n+1}\sum_{j=0}^n(1-\lambda)^{n-j}\lambda^j V(K[j],-K[n-j]).
\end{align*}
Thus, using Lemma \ref{lem:vol-of-C}, we have that 
$$\sum_{j=0}^n(1-\lambda)^{n-j}\lambda^j V(K[j],-K[n-j])\le \vol(K),$$
and equality implies that $K$ is an $n$-simplex. To show that if $K$ is a simplex then equality in fact holds, simply use the fact that $V(K[j],-K[n-j]) = \binom{n}{j}$ together with the binomial theorem.
\end{proof} 

%
%
%
%
%

A key ingredient in the proof of Lemma \ref{lem:vol-of-C} is the Rogers-Shephard inequality for sections and projections \cite{rogers1958convex} (for the full equality condition see e.g.~\cite{AEFO2015}):

\begin{lem}[Rogers and Shephard]\label{lem-RSsec-proj}
	Let $T\subset \RR^m$ be a convex body, let $E\subset \RR^m$ be a subspace of dimension $j$. Then
	\[ \vol(P_{E^{\perp}}T)\vol(T\cap E) \le \binom{m}{j}\vol(T), \]
	where $P_{E^{\perp}} $ denotes the projection operator onto $E^{\perp}$.
    Moreover, if equality holds then all the sections $\{(T \cap (E + x)^\perp): x \in P_{E^\perp}(T)\}$ are homothetic.
\end{lem}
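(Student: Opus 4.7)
The plan is to slice $T$ parallel to $E$ and reduce the geometric inequality to a sharp inequality for integrals of a power of a concave function, which I prove via the layer-cake formula.

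Decompose $\RR^m = E \oplus E^\perp$ and write $P = P_{E^\perp}$. For each $y \in PT$, the affine slice $T_y := T \cap (E + y)$ is a $j$-dimensional convex body which I identify with a subset of $E$. The Brunn--Minkowski inequality applied to $T$ shows that $g(y) := \vol_j(T_y)^{1/j}$ is concave on the convex body $PT \subset E^\perp$, and Fubini gives $\vol_m(T) = \int_{PT} g(y)^j \, dy$. If $T \cap E = \emptyset$ the lemma is trivial; otherwise $0 \in PT$ and $\vol_j(T \cap E) = g(0)^j$. The desired inequality thus reduces to
\[
g(0)^j \, \vol_{m-j}(PT) \;\le\; \binom{m}{j} \int_{PT} g(y)^j \, dy.
\]

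This is a special case (with $k = m-j$, $p = j$) of the following analytic fact: for any convex body $D \subset \RR^k$ with $0 \in D$ and any concave $g: D \to \RR_{\ge 0}$,
\[
g(0)^p \, \vol_k(D) \;\le\; \binom{k+p}{p} \int_D g(y)^p \, dy.
\]
To prove it, normalize $g(0) = 1$. For $y \in D$ and $t \in [0,1]$, concavity together with $g \ge 0$ gives $g(ty) \ge (1-t) g(0) + t g(y) \ge 1-t$, so $\{g \ge s\} \supset (1-s) D$ for $s \in [0,1]$ and hence $\vol_k(\{g \ge s\}) \ge (1-s)^k \vol_k(D)$. The layer-cake formula then gives
\[
\int_D g^p \;=\; p \int_0^\infty s^{p-1} \vol_k(\{g \ge s\}) \, ds \;\ge\; p \, \vol_k(D) \int_0^1 s^{p-1} (1-s)^k \, ds \;=\; \frac{p!\,k!}{(k+p)!}\vol_k(D),
\]
which rearranges to the claim.

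For equality, the layer-cake chain is tight only when $g \le g(0)$ everywhere and $\vol_k(\{g \ge s\}) = (1-s)^k \vol_k(D)$ for every $s \in [0,1]$. Since each $\{g \ge s\}$ is a convex superset of $(1-s) D$ of equal volume, $\{g \ge s\} = (1-s) D$, forcing $g(y) = g(0)(1 - \mu_D(y))$ with $\mu_D$ the Minkowski gauge of $D$. In particular $g$ is linear along every ray from $0$, so Brunn--Minkowski attains equality along each such ray, meaning the slices $T_0$ and $T_y$ are homothetic for every $y \in PT$; by transitivity all slices are mutually homothetic. The main obstacle I foresee is this last step: promoting the analytic equality (which only constrains the volumes of slices to decay conically) to the geometric conclusion that the slices themselves are homothetic, which requires a careful appeal to the equality case of Brunn--Minkowski along rays emanating from the origin in $PT$.
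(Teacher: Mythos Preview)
The paper does not supply a proof of this lemma: it is stated as a known result, with the inequality attributed to Rogers and Shephard \cite{rogers1958convex} and the equality condition referenced to \cite{AEFO2015}. So there is nothing in the paper to compare your argument against.

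That said, your proof is exactly the classical one and is correct. The reduction via Fubini and Brunn's concavity principle to the analytic inequality $g(0)^p\vol_k(D)\le\binom{k+p}{p}\int_D g^p$ for nonnegative concave $g$ is standard, and your layer-cake proof of that inequality (using $\{g\ge s\}\supset(1-s)D$ and the Beta integral) is clean and sharp.

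Your worry about the equality case is unwarranted; the argument you sketch is already complete. Once you know $\{g\ge s\}=(1-s)D$ for all $s\in[0,1]$, you have $g(ty)=(1-t)g(0)$ for every $y\in\partial D$ and $t\in[0,1]$, hence $g$ is affine on each radius. Since $T_{ty}\supset(1-t)T_0+tT_y$ by convexity of $T$, and $\vol_j(T_{ty})^{1/j}=g(ty)=(1-t)g(0)+tg(y)\le\vol_j((1-t)T_0+tT_y)^{1/j}$ by Brunn--Minkowski, both inclusions are equalities of convex bodies with equal volume, forcing $T_{ty}=(1-t)T_0+tT_y$ and $T_0,T_y$ homothetic whenever $\vol_j(T_y)>0$. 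That covers all $y\in\intr(PT)$, and transitivity finishes it. The only cosmetic point is that boundary slices may be degenerate (volume zero), but this does not affect the stated conclusion.
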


We turn to the proof of Lemma \ref{lem:vol-of-C} regarding the volume of $C$.  

\begin{proof}[Proof of Lemma \ref{lem:vol-of-C}] 
We borrow directly the method of \cite{rogers1958convex}. Let $K_1,K_2\subset \RR^n$ be convex bodies. Consider the convex body $T\subset \RR^{2n+1}= \RR \times \RR^n \times \RR^n$ defined by  
\[ T = \conv (\{(0,0,y): y\in K_2\} \cup \{ (1,x,-x): x\in K_1\}). \]  
Written out in coordinates this is simply 
\begin{align*}  T &= \{ (\theta, \theta x , -\theta x + (1-\theta)y ): \theta \in [0, 1], x\in K_1, y\in K_2\}\\
& = \{ (\theta, w, z): \theta \in [0, 1], w\in \theta K_1, z+w\in (1-\theta)K_2\}. 
\end{align*}
The volume of $T$ is thus, by simple integration, equal to 
\[ \vol(T) = \vol(K_1) \vol(K_2) 
\int_0^1 \theta^n (1-\theta)^n\, d\theta = \frac{n!n!}{(2n+1)!}\vol(K_1) \vol(K_2). \] 
Fix $\theta_0 \in [0, 1]$. We consider the section of $T$ by the $n$-dimensional affine subspace 
\[ E = \{(\theta_0, x, 0): x\in \RR^n\}\] 
and project it onto the complement $E^\perp$. The section is given by
\[ T\cap E = \{ (\theta_0, x, 0): x\in \theta_0 K_1 \cap (1-\theta_0)K_2\}\] 
and so $\vol_n(T\cap E) = \vol(\theta_0 K_1 \cap (1-\theta_0) K_2 )$, while the projection is
\begin{align*} P_{E^{\perp}}T &= \{ (\theta, 0, y): \theta \in [0, 1], y \in \mathbb R^n\,|\, \exists x {\rm ~with~} (\theta, x, y)\in T \}\\
&=  \{ (\theta, 0, y): \theta \in [0, 1], \theta K_1 \cap ((1-\theta)K_2-y) \}\\
&=  \{ (\theta, 0, y):  \theta \in [0, 1], y\in  (1-\theta)K_2-\theta K_1 \}.
\end{align*}
Thus 
$$\vol_n(P_{E^\perp}T)= \vol(\{(\theta, y):  y\in  (1-\theta)K_2-\theta K_1\})$$ 
which is precisely a set of the type we considered before in $\RR^{n+1}$. In fact, taking $K_1 = \lambda K$, $K_2 = (1 - \lambda) K$ we get that $P_{E^\perp}T = C$.

Staying with our original $K_1$ and $K_2$, and using the Rogers-Shephard inequality for sections and projections (Lemma \ref{lem-RSsec-proj}), we see that 
\[ \vol(P_{E^{\perp}}T)\vol(T\cap E) \le \binom{2n+1}{n}\vol(T), \]
which translates to the following inequality   
\[ \vol (\conv (\{0\}\times K_2 \cup \{1\}\times (-K_1))) \le \frac{1}{n+1} \frac{\vol(K_1)\vol(K_2)}{\vol(\theta_0 K_1 \cap (1-\theta_0)K_2)}. \]

We mention that this exact same construction was performed and analyzed by Rogers and Shephard for the special choice $\theta_0 = 1/2$, which is optimal if $K_1 = K_2$. 

For our special choice of $K_2 = (1-\lambda)K$ and $K_1 = \lambda K$ we pick $\theta_0 = (1-\lambda)$ so that the intersection in question is simply $\lambda (1-\lambda) K$, which cancels out when we compute the volumes in the numerator. We end up with 
\[ \vol (\conv (\{0\}\times (1-\lambda)K \cup \{1\}\times (-\lambda K))) \le \frac{1}{n+1} \vol(K),  \] 
which was the statement of the lemma. 

It remains to consider the equality case. By the equality case of Lemma \ref{lem-RSsec-proj}, if equality holds then the sections \begin{align*}
    T_{\theta, y} &= T \cap \{(\theta, x, y): x \in \mathbb R^n\} \\
    &= \{(\theta, x, y): x \in \theta \cdot \lambda K, x + y \in (1 - \theta) \cdot (1 - \lambda) K \} \\
    &= (\theta, 0, y) + \{(0, x, 0): x \in \theta\lambda K \cap ((1 - \theta)(1 - \lambda) K)\}
\end{align*}
are all homothetic (when nonempty). In particular, taking $\theta = 1- \lambda$ as before we see that the bodies $\lambda(1 - \lambda) K \cap (\lambda(1 - \lambda) K + y)$ as $y$ varies are all homothetic, which implies by  \cite[Lemma 4]{rogers1957difference} that $\lambda (1 - \lambda) K$, and hence $K$, is a simplex.
\end{proof}

This concludes the proof of Lemma \ref{lem:vol-of-C}, and with it, the proof of Theorem \ref{theorem-on-sum}.

\section{The unbalanced covariogram and the proof of Theorem \ref{thm:gen_rs}}\label{sec:gen_rs}

Before proving Theorem \ref{thm:gen_rs}, we give two preliminary results.

\subsection{The local Steiner formula}
Let $K, B $ be convex bodies in $\RR^n$ such that  $0 \in \intr(B)$. For $x, y \in \mathbb R^n$, define
\begin{equation}\label{eq:gauge_dist}
d_B(x, K) = \min \{r \ge 0: x \in K + rB\}.
\end{equation}
It follows from \cite[(14.25), (14.27)]{SWstochastic} that if $K, B$ are in so-called general relative position (in particular, this holds if $K, B$ are $C^{2, +}$), one has the following local Steiner formula: for every nonnegative measurable function $g: [0, \infty) \to [0, \infty)$,
\begin{equation}\label{eq:loc_steiner}
\int_{\mathbb R^n} g(d_B(x, K))\,dx = \vol(K)g(0) + \sum_{m = 0}^{n - 1} V(K[m], B[n - m])  \cdot \binom{n}{m}\int_0^\infty (n - m) t^{n - 1 - m} g(t)\,dt.
\end{equation}
In fact, if $g$ is monotone, \eqref{eq:loc_steiner} holds for arbitrary convex bodies $K, B\subset \RR^n$ with $0 \in \intr(B)$. To see this, approximate $B$ from within and $K$ from without by sequences of smooth convex bodies $\{B_j\}_{j = 1}^\infty, \{K_j\}_{j = 1}^\infty$, and note that $d_{B_j}(x, K_j)$ is monotone increasing to $d_B(x, K)$, so $$\int_{\mathbb R^n} g(d_{B_j}(x, K_j))\,dx \to \int_{\mathbb R^n} g(d_B(x, K))\,dx$$
by monotone convergence. On the right-hand side, simply use that 
$$V(K_j[m], B_j[n - m]) \to V(K_j[m], B_j[n - m])$$ 
by the Hausdorff continuity of mixed volumes.

\subsection{A characterization of the simplex}
In order to prove the equality case in Theorem \ref{thm:gen_rs}, we need the following characterization of the simplex:

\begin{prop}\label{prop:hom_simp} Let $K \subset \mathbb R^n$ be a convex body, $\lambda \in (0, 1)$, and suppose that $(\lambda K + x) \cap K$ is homothetic to $K$ for all $x \in \intr(K - \lambda K)$. Then $K$ is an $n$-simplex.
\end{prop}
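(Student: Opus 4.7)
The plan is to adapt the classical argument of \cite[Lemma 4]{rogers1957difference}, which handles the case $\lambda = 1$, to the asymmetric setting $\lambda \in (0, 1)$. Throughout, write $(\lambda K + x) \cap K = \mu(x) K + t(x)$ for $x \in \intr(K - \lambda K)$; both functions are uniquely determined by $x$ under the hypothesis.

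The first observation is that the hypothesis is precisely the condition arising in the equality case analyzed in the proof of Theorem \ref{thm:gen_rs}. Since the intersections $(\lambda K + y) \cap K$ are all homothets of $K$, they are pairwise homothetic, so the equality case of Brunn--Minkowski forces equality throughout the chain leading to \eqref{eq:gen_rs_eq}. In particular, one has $f(x) = (1 - d_B(x, K'))^n$ on all of $K - \lambda K$ (with $K' = (1-\lambda) K$ and $B = \lambda(K - K)$ as in that proof), so $\mu(x) = \lambda(1 - d_B(x, K'))$. Moreover, along any segment from $u \in \partial K'$ to $u + v \in \partial(K' + B)$, the homothets $(\lambda K + u + sv) \cap K$, $s \in [0, 1]$, interpolate affinely between $\lambda K + u$ at $s = 0$ and a single limit point $z'(v) \in \partial K$ at $s = 1$, so in particular $t(u + sv) = (1 - s) u + s z'(v)$.

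Next, I would use the rigidity of the vertex map $v \mapsto z'(v)$ --- defined on boundary directions $v \in \partial B$ and taking values in $\partial K$ --- to pin down the shape of $K$. Since the decomposition $x = u + sv$ of a given $x$ is not unique, consistency of $t(x)$ across different decompositions forces $z'$ to take only finitely many values, each an extreme point of $K$. Matching the normal fan of $K - \lambda K$ with that of $K$ via $z'$, together with a dimension count, should then yield that $K$ has exactly $n+1$ extreme points and hence is a simplex.

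The main obstacle is this last combinatorial step: translating the local collapse data (the behavior of the shrinking homothets near $\partial(K - \lambda K)$) into the global conclusion that $K$ has the face lattice of a simplex. Rogers and Shephard's original argument exploits the symmetry $x \leftrightarrow -x$ available in the case $\lambda = 1$, which is absent here; handling the asymmetric case requires more delicate bookkeeping of the vertex map. An alternative route would be to show directly that the hypothesis forces $(K + y) \cap K$ to be a homothet of $K$ for every $y \in \intr(K - K)$, thereby reducing to the classical statement, but I do not see an immediate such reduction.
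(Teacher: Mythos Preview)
Your first observation does not go through as stated. Pairwise homothety of the intersections $(\lambda K + y)\cap K$ yields equality in the Brunn--Minkowski step $\vol((1-t)A+tB)^{1/n}=(1-t)\vol(A)^{1/n}+t\vol(B)^{1/n}$, but \emph{not} in the set inclusion $(1-t)A+tB\subset(\lambda K+(1-t)y+tz)\cap K$ that precedes it in the derivation of \eqref{eq:cov_1/n_conc}. Both sides of that inclusion are homothets of $K$, yet the left can still be strictly smaller; so from the hypothesis you only recover that $\mu=\lambda f^{1/n}$ is concave, which holds for every $K$. The formula $\mu(x)=\lambda(1-d_B(x,K'))$ and the affine interpolation $t(u+sv)=(1-s)u+sz'(v)$ therefore remain unjustified, and invoking equality in \eqref{eq:gen_rs_eq} to obtain them would be circular, since Proposition~\ref{prop:hom_simp} is precisely what establishes that equality case. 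Beyond this, you yourself flag the combinatorial endgame (finiteness of the range of $z'$, identification with extreme points, the normal-fan matching) as unresolved; the symmetry that makes Rogers--Shephard's original bookkeeping work is indeed absent here, and no substitute is offered.

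The paper's argument avoids both problems by working directly with the homothety hypothesis rather than with $f$. First it shows $K$ is a polytope: if two exposed points $x_1,x_2$ were closer than $2\lambda r$ ($r$ the inradius of $K$), one could translate $\lambda K$ so that both lie in its interior; the resulting intersection $\alpha K+z$ with $\alpha<1$ would have its exposed points in the directions $u_1,u_2$ at distance $\alpha|x_1-x_2|$, yet those exposed points are still $x_1,x_2$ themselves --- a contradiction. Second, for a non-simplex polytope one picks a facet $F$ with at least two vertices $v_1,v_2$ of $K$ outside it, places the origin in the relative interior of $F$, and shifts $\lambda K$ by $\epsilon u$ along the outward normal of $F$. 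Then $\lambda v_i+\epsilon u$ are vertices of $(\lambda K+\epsilon u)\cap K=\lambda'K+z$ with $\lambda'<\lambda$, forcing $\lambda|v_1-v_2|=\lambda'|v_1-v_2|$, again a contradiction. This route never needs the formula for $\mu$ or any vertex-map machinery.
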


Many similar characterizations of the simplex are known; in particular, the case $\lambda = 1$, which we have already used above in the proof of Theorem \ref{theorem-on-sum}, yields the equality case in the Rogers-Shephard inequality \eqref{eq:rs_mvol} for $\vol(K - K)$. Not having found this particular characterization in the literature, we give a full proof.

As in \cite{rogers1957difference}, the proof of Proposition \ref{prop:hom_simp} consists of two steps: proving that $K$ must be a polytope, and showing that, given that $K$ is a polytope, it is a simplex. The first step in the proof is essentially the same as in \cite{rogers1957difference}. For the second step, surprisingly, there is a much simpler argument available in our setting than in the classical case.

\begin{proof}[Proof of Proposition \ref{prop:hom_simp}] First, we show that $K$ must be a polytope. We say that $p \in K$ is an exposed point of $K$ if and only if there exists $u \in S^{n - 1}$ such that 
$$\{p\} = K^u := \{x \in K: \langle x, u\rangle = \sup_{z \in K} \langle z, u\rangle\}.$$ 
It is known that $K$ is the closure of the convex hull of its exposed points \cite[Theorem 1.4.7]{schneider2014book}. Let $r$ be the inradius of $K$; we claim that any two exposed points of $K$ are at distance at least $2\lambda r$, which implies that there are only finitely many, and hence that $K$ is a polytope.

Indeed, suppose $x_1, x_2$ are exposed points of $K$ corresponding to vectors $u_1, u_2 \in S^{n - 1}$. If $|x_1 - x_2| < 2\lambda r$, one can translate $\lambda K$ by some vector $t \in K - \lambda K$ so that $x_1, x_2 \in \intr(\lambda K + t)$, and hence $K' = K \cap (\lambda K + t)$ is a convex body containing $x_1, x_2$; by assumption $K'$ must be of the form $\alpha K + z$ for some $\alpha < 1$ and $z \in \mathbb R$. For any $u \in S^{n - 1}$, $(K')^u = \alpha K^u + z$; in particular, $(K')^{u_i} = \{\alpha x_i + z\}$ for $i = 1, 2$ and so $d((K')^{u_1}, (K')^{u_2}) = \alpha |x_1 - x_2| < |x_1 - x_2|$. But since $x_1, x_2$ are exposed points of $K$, it is clear that they are also exposed points of $K' = K \cap (\lambda K + t)$, so $(K')^{u_1} = \{x_1\}$, $(K')^{u_2} = \{x_2\}$, contradiction.

It remains to show that a polytope $K$ satisfying the assumption must be a simplex. If $K$ is not a simplex, there exists a facet $F \subset K$ such that at least two vertices of $K$ lie outside $F$. (This is well-known, but we sketch a quick proof. Let $V$ denote the set of vertices of $K$, and suppose that for every facet $F \subset K$ we have $|V \backslash F| = 1$. Then $|V \backslash \bigcap_{i = 1}^k F_i| \le k$ for any facets $F_1, \ldots, F_k$ of $K$. As $K$ is an $n$-polytope, we may find $n$ facets $F_1, \ldots, F_n$ intersecting at a vertex, yielding $|V| = |V \backslash \bigcap_{i = 1}^n F_i| + 1 \le n + 1$, so $K$ is a simplex.)

So let $F = K^u$ be some facet of $K$ such that at least two vertices of $K$ do not belong to $F$, and let $v_1, v_2$ be any two such vertices. We fix the origin to be some point in the relative interior of $F$, so that $\lambda v_1, \lambda v_2 \in (\lambda K) \cap \intr(K)$. (It is crucial that $F$ is a facet; otherwise it would not follow that $\lambda v_i \in \intr(K)$, which we need.) Fix $\epsilon > 0$ sufficiently small so that $\lambda v_1 + \epsilon u, \lambda v_2 + \epsilon u \in K$. Then $K' = (\lambda K + \epsilon u) \cap K$ has strictly less volume than $\lambda K$, so there exists $\lambda' < \lambda$ such that $K' = \lambda' K + z$. This again yields a contradiction, as it implies that the vertices of $K'$ corresponding to $v_1, v_2$ under the homothety are at distance $\lambda' |v_1 - v_2|$, but on the other hand, these vertices must be $\lambda v_1 + \epsilon u, \lambda v_2 + \epsilon u \in K'$.
\end{proof}

\subsection{Proof of Theorem \ref{thm:gen_rs}}

\begin{proof}
Fix a convex body $K \subset \RR^n$, and define the function $f: \mathbb R^n \to [0, \infty)$ by
$f(x) = \frac{\vol( (\lambda K + x) \cap K)}{\vol (\lambda K)}$. The function $f$ is an ``unbalanced'' version of the covariogram function $g_K(x) = \vol (K \cap (K + x))$ which is often  used to prove the Rogers-Shephard inequality (see, e.g., \cite[\S 1.5.1]{AGA}). Our proof runs along the same lines, but is a bit more involved.

Let us list some useful properties of the function $f$. 
\renewcommand{\labelenumi}{(\roman{enumi})}
\begin{enumerate}
    \item By Fubini, one has 
    \begin{align*}
    \int_{\mathbb R^n} f &= \frac{1}{\vol (\lambda K)} \int_{\mathbb R^n} \vol ((\lambda K + x) \cap K)\,dx \\
    &= \frac{1}{\vol (\lambda K)} \int_{\mathbb R^n} \int_{\mathbb R^n} 1_K(y) 1_{\lambda K}(y - x) \,dy\,dx \\
    &= \frac{1}{\vol (\lambda K)} \int_{\mathbb R^n} \int_{\mathbb R^n} 1_K(y) 1_{\lambda K}(y - x) \,dx\,dy \\
    &=\frac{1}{\vol (\lambda K)}  \cdot \int_{\mathbb R^n} 1_K(y) \cdot \vol (\lambda K) \,dy = \vol( K).
    \end{align*}
    \item Clearly $f$ is supported on $\supp( f) = K - \lambda K$. 
    \item By a standard argument based on the Brunn-Minkowski inequality, $f$ is $\frac{1}{n}$-concave on its support, i.e., for all $y, z \in K - \lambda K$, $t \in [0, 1]$ one has 
    \begin{equation}\label{eq:cov_1/n_conc}
    f((1 - t)y + tz)^{\frac{1}{n}} \ge (1 - t) f(y)^{\frac{1}{n}} + t f(z)^{\frac{1}{n}}.
    \end{equation}
    To see this, use the fact that
    $$(\lambda K + ((1 - t)y + tz)) \cap K \supset (1 - t) \cdot ((\lambda K + y) \cap K) + t((\lambda K + z) \cap K),$$
    take the volume of both sides, and apply the Brunn-Minkowski inequality to the right-hand side. 
    
    Also note that, by the equality conditions of the Brunn-Minkowski inequality, equality in \eqref{eq:cov_1/n_conc} is possible only if $(\lambda K + y) \cap K$ and $(\lambda K + z) \cap K$ are homothetic.
    
    \item We have that $0\le f(x)\le 1$, that  $f(x) = 1$ if and only if $x \in (1 - \lambda) K$ and $f(x) = 0$ if and only if $x \not\in\intr ( K - \lambda K)$.  
\end{enumerate}

Note that any $x \in K - \lambda K$ can be written (generically in many ways) in the form $x = (1 - t) y + t z$ for $y \in (1 - \lambda) K$ and $z \in K - \lambda K$. Any such representation, by properties (iii) and (iv), yields a bound $f(x) \ge (1 - t)^n$. The best possible such bound is obtained by taking the minimal $t$ for which such a representation exists.

Let $K' = (1 - \lambda) K$, $B = \lambda (K - K)$, so that $K - \lambda K = K' + B$. One verifies that for any $x \in K' + B$, the minimal $t$ such that $x = (1 - t) y + t z$ for $y \in K'$ and $z \in K' + B$ is given by $d_B(x, K')$, as defined in \eqref{eq:gauge_dist}. In fact, all we need is the (a priori) weaker fact that if $d_B(x, K') \le r \le 1$ then there exist $y \in K'$, $w \in K' + B$ such that $x = (1 - r) y + r w$: but this is obvious, as if $x \in K' + rB$ then $x = y + rz$ for $y \in K'$, $z \in B$ and so $x = (1 - r) y + r(y + z)$, and $y + z \in K' + B$ by definition.

The upshot of all the above is that for any $x \in \mathbb R^n$, $f(x) \ge (1 - d_B(x, K'))^n 1_{K' + B}(x)$, and so 
\begin{equation}\label{eq:unb_cov_int}
    \vol(K)= \int_{\mathbb R^n} f \ge \int_{K' + B} (1 - d_B(x, K'))^n \,dx.
\end{equation}
To compute the right-hand side of \eqref{eq:unb_cov_int}, we apply \eqref{eq:loc_steiner} with $g(t) = (1 - t)^n 1_{t \le 1} $:
\begin{align*}\int_{K' + B} (1 - d_B(x, K'))^n \,dx &= |K'| + \sum_{m = 0}^{n - 1} V(K'[m], B[n - m])  \cdot (n - m)\binom{n}{m} \int_0^1 t^{n - 1 - m}  (1 - t)^n\,dt \\
&= |K'| + \sum_{m = 0}^{n - 1} V(K'[m], B[n - m]) \cdot (n - m)\binom{n}{m} \cdot \frac{\Gamma(n - m) \Gamma(n + 1)}{\Gamma(2n - m + 1)} \\
&=\sum_{m = 0}^n V(K'[m], B[n - m]) \cdot \frac{n! n!}{(2n - m)! m!}.
\end{align*}
Since $K' = (1 - \lambda) K$, $B = \lambda (K - K)$, we can expand the terms $V(K'[m], B[n - m])$ in mixed volumes:
\begin{align*}V(K'[m], B[n - m]) &= (1 - \lambda)^m \lambda^{n - m} V(K[m], (K - K)[n - m]) \\
&= (1 - \lambda)^m \lambda^{n - m} \sum_{j = 0}^{n - m} \binom{n - m}{j} V(K[n - j], -K[j])
\end{align*}
Combining the last three equations yields
$$\vol(K) \ge \sum_{m = 0}^n \frac{n! n!}{(2n - m)! m!} (1 - \lambda)^m \lambda^{n - m} \sum_{j = 0}^{n - m} \binom{n - m}{j} V(K[n - j], -K[j]),$$
which is \eqref{eq:gen_rs_eq}.

To show that equality holds for an $n$-simplex $\Delta$, recall that $V(\Delta[n - j], -\Delta[r]) = \binom{n}{n - j} \vol(\Delta)$ for all $j \in \{0, \ldots, n\}$. Thus, the interior sum on the right-hand side in \eqref{eq:gen_rs_eq} becomes
$$\vol(\Delta) \cdot \sum_{j = 0}^{n - m} \binom{n - m}{j} \binom{n}{n - j},$$
which by the Chu-Vandermonde identity simplifies to $\binom{2n - m}{n} \vol(\Delta)$, so the right-hand side is just
$$\vol(\Delta) \cdot \sum_{m = 0}^n \frac{n! n!}{(2n - m)! m!} (1 - \lambda)^m \lambda^{n - m} \binom{2n - m}{n} = \vol(\Delta)\sum_{m = 0}^n \binom{n}{m} (1 - \lambda)^m \lambda^{n - m} = \vol(\Delta),$$
as desired.

Finally, we show the converse, namely that equality in \eqref{eq:gen_rs_eq} holds \textit{only} for a simplex. First of all, equality in \eqref{eq:gen_rs_eq} implies equality in \eqref{eq:unb_cov_int}, so for almost every $x \in K' + B$ we have
\begin{equation}\label{eq:1/n_affine}
    f(x) = (1 - d_B(x, K'))^n,
\end{equation}
and hence this holds for every such $x$ because $f(x) =\vol( (\lambda K + x) \cap K)$ is continuous.

Let $x \in \intr(K' + B)$ and write $x = u + t v$ where $u \in \partial K'$, $v \in \partial B$, and $t = d_B(x, K') < 1$. (This decomposition need not be unique.) It is easy to see that $d_B(u + sv, K') = s$ for any $s \in [0, t]$, so \eqref{eq:1/n_affine} shows that along the segment $[u, x]$, the function $y \mapsto |(\lambda K + y) \cap K|^{1/n}$ is affine, which means that we must have equality in \eqref{eq:cov_1/n_conc}. By the remark below \eqref{eq:cov_1/n_conc}, this implies that for any $y \in [u, x]$ the sets $(\lambda K + x) \cap K$ and $(\lambda K + y) \cap K$ are homothetic. In particular, letting $y = u$ and noting that $u \in (1 - \lambda) K$, $(\lambda K + u) \cap K = \lambda K + u$, we see that $(\lambda K + x) \cap K$ is homothetic to $K$. 

In sum, we have obtained that equality in \eqref{eq:gen_rs_eq} implies that $(\lambda K + x) \cap K$ is homothetic to $K$ for all $x \in \intr(K - \lambda K)$. By Proposition \ref{prop:hom_simp}, this implies that $K$ is a simplex, concluding the proof of Theorem \ref{thm:gen_rs}.
\end{proof}

\section{Proofs of Theorems \ref{theorem-on-sum} and \ref{thm:gen_rs} assuming Conjecture \ref{conj:unbalancedRS}}\label{sec:unb_diff}
Throughout the section, we fix $K$ and write $\Delta$ for the $n$-simplex of volume $\vol(K)$. Recall that we use the notation $D_\lambda K = (1 - \lambda) K - \lambda K$ for the unbalanced difference body.

The goal in this brief section is to show that Theorems \ref{theorem-on-sum} and \ref{thm:gen_rs} would follow from the conjectured unbalanced Rogers-Shephard inequality (Conjecture \ref{conj:unbalancedRS}); that is, if we knew that $\frac{|D_\lambda K|}{|K|} \le \frac{|D_\lambda \Delta|}{|\Delta|}$ for all $\lambda \in [0, 1]$, it would directly imply Theorems \ref{theorem-on-sum} and \ref{thm:gen_rs}.

For Theorem \ref{theorem-on-sum}, recall that the main lemma in the proof (Lemma \ref{lem:vol-of-C}) is that $\vol(C) \le \frac{\vol(K)}{n + 1}$, where 
\[ C = \conv (\{0\}\times(1-\lambda) K \cup \{1\}\times -\lambda K). \]
In the proof of Theorem \ref{theorem-on-sum} using Lemma \ref{lem:vol-of-C}, we wrote 
$$\vol(C) = \int_{0}^{1} \vol((1-\eta)(1-\lambda)K - \eta \lambda K)\, d\eta$$ 
and then expanded the integrand in mixed volumes of $K$ and $-K$; but we could also have written $(1-\eta)(1-\lambda)K - \eta \lambda K = \alpha \cdot D_\mu K$ where $\alpha = (1 - \eta)(1 - \lambda) + \eta \lambda$ and $\mu = \frac{\eta \lambda}{\alpha}$, so that 
$$\vol((1-\eta)(1-\lambda)K - \eta \lambda K) = \alpha^n \vol(D_\mu K),$$ obtaining $\vol(C)$ as an integral over $f(\nu) = \vol(D_\nu K)$ with respect to some measure on $[0, 1]$ which is independent of $K$. Thus, if Conjecture \ref{conj:unbalancedRS} were true we would have
$\vol(C) \le \vol(C_\Delta)$ where $\Delta$ is a simplex with $\vol(\Delta)= \vol(K)$ and $$C_\Delta = \conv (\{0\}\times(1-\lambda) \Delta \cup \{1\}\times -\lambda \Delta).$$ 
By direct computation, or by using the fact that Theorem \ref{theorem-on-sum} is sharp for the simplex, we have $\vol(C_\Delta) = \frac{\vol(\Delta)}{n + 1}$, yielding Theorem \ref{theorem-on-sum} (without relying on Lemma \ref{lem:vol-of-C}). 

As for Theorem \ref{thm:gen_rs}, recall that we obtained the inequality 
\begin{equation}\label{eq:unb_covar}
    \int_{K + B} (1 - d_B(x, K'))^n\,dx \le \vol(K)
\end{equation}
where $K' = (1 - \lambda) K$, $B = \lambda (K - K)$. We rewrote the integral in terms of mixed volumes, but one can also use the identity $\int g = \int_0^\infty \vol (\{g \ge t\}) \,dt$ with $g = (1 - d_B(x, K'))^n$, 
and note that 
$$\{g \ge t\} = \{x: (1 - d_B(x, K'))^n \le t\} = K' + (1 - t^{1/n}) B,$$
which is again of the form $\alpha \cdot D_\lambda K$ for some $\alpha, \lambda$ depending on $t$. Thus, Theorem \ref{thm:gen_rs} also reduces to proving an inequality of the form $\int_0^1 w(\lambda) D_\lambda(K)\,d\lambda \le \vol(K)$
for some weight function $w: [0, 1] \to \mathbb R^+$ independent of $K$. Again, an explicit computation, or the fact that Theorem \ref{thm:gen_rs}
is sharp for the simplex, shows that 
$\int_0^1 w(\lambda) D_\lambda(\Delta)\,d\lambda =  \vol(\Delta)$,
so that if Conjecture \ref{conj:unbalancedRS} were true it would imply immediately that 
$\int_0^1 w(\lambda) D_\lambda(K)\,d\lambda \le \vol(K)$, yielding Theorem \ref{thm:gen_rs} while bypassing the argument based on the Brunn-Minkowski inequality that we used to derive \eqref{eq:unb_covar}.

\section{A new proof of an inequality from \cite{AEFO2015}}\label{sec:extras}

	We end this note with a simple geometric proof of the following inequality from \cite{AEFO2015} (which reappeared independently in \cite{agjv2016}):
 
	\begin{thm} \label{thm:strange} Let  $K,L \subset {\mathbb R}^n$ be convex bodies with $0 \in \intr(K \cap L)$. Then 
		\begin{equation}\label{eq:polar_sum_ineq}
		\vol (\conv(K \cup - L) ) \,
		\vol ((K^\circ +  L^\circ)^\circ ) \le
		\vol(K) \, \vol(L).
        \end{equation}
	\end{thm}
	We remark that this inequality can be thought of as a dual to the Milman-Pajor inequality \cite{milman2000} stating that when $K$ and $L$ have center of mass at the origin one has 
	\[ \vol (\conv(K \cap - L) ) \,
			\vol (K +  L) \ge
			\vol(K) \, \vol(L).\]

	\begin{proof}[Simple geometric proof of Theorem \ref{thm:strange}]
	Given $K, L \subset \mathbb R^n$, construct the body
	\[ C = \conv (K\times \{0\}\cup \{0\}\times L) \subset \mathbb R^{2n}.\]
	The volume of $C$ is  simply 
	\begin{equation}\label{eq:orth_conv_vol}
    \vol(C) = \vol(K) \vol(L) \frac{1}{\binom{2n}{n}}.
    \end{equation}
	Consider the two orthogonal subspaces of $\RR^{2n}$ of dimension $n$ given by $E = \{ (x,x): x\in \RR^n\}$ and $E^{\perp} = \{ (y,-y): y \in \RR^{n}\}$. First we compute $C\cap E$: 
	
	\[ C\cap E = \{ (x,x): x = \lambda y, x=(1-\lambda)z,\lambda \in [0,1], y\in K, z\in L\}. \] In other words, 
	\[ C\cap E = \{ (x,x): x\in \cup_{\lambda\in [0,1]}( \lambda K \cap (1-\lambda)L)\}= 
	 \{ (x,x): x\in (K^{\circ}+L^{\circ})^{\circ}\}.
	\] 
	Next let us calculate the projection of $C$ onto $E^{\perp}$: 
	Since $C$ is a convex hull, we may project $K\times \{0\}$ and $\{0\}\times L$ onto $E^{\perp}$ and then take a convex hull. In other words we are searching for all $(x,-x)$ such that there exists $(y,y)$ with $(x+y, -x+y)$ in  $K\times \{0\}$ or 
	$\{0\}\times L$. Clearly this means that $y$ is either $x$, in the first case, or $-x$, in the second, which yields 
	\[P_{E^{\perp}}C = \conv \{ (x,-x): 2x\in K {\rm ~or~}
	-2x \in L\} = \{ (x,-x): x\in \conv (K/2 \cup -L/2) \}.\]

	In terms of volume we get that 
	\[ \vol_n(C\cap E) = \sqrt{2}^n \vol_n((K^{\circ}+L^{\circ})^{\circ}) \] and 
	\[ \vol_n (P_{E^{\perp}}(C)) = \sqrt{2}^{-n} \vol_n (\conv (K \cup -L))\]
	and so their product is precisely the quantity on the right hand side of \eqref{eq:polar_sum_ineq}. On the other hand, by the Rogers Shephard inequality for sections and projections (Lemma \ref{lem-RSsec-proj}), we know that
	\[ \vol_n(C\cap E)\vol_n (P_{E^{\perp}}(C))
	\le \vol_{2n}(C)\binom{2n}{n}. \]
	Substituting $\vol(C)$ on the right-hand side using \eqref{eq:orth_conv_vol} we obtain the desired inequality \eqref{eq:polar_sum_ineq}.
    \end{proof}
	
	\begin{rem}
    The same method also yields another proof of \cite[Theorem 1.5]{AEFO2015}, which states that for any $\lambda \in [0, 1]$ and any $K$ containing the origin one has
    $$\conv((1-\lambda)K \cup -\lambda K) \le \vol(K).$$
	Indeed, taking $K = L$ in the construction of $C$, and choosing the subspace $E_\lambda = \{(\lambda x, (1-\lambda)x): x\in \RR^n\}$ in place of $E$, we obtain 
	\[ C\cap E_\lambda = \{ (\lambda x, (1-\lambda)x): x\in K\}\] and 
	\[ P_{E_\lambda^\perp} C = \left\{ ((1-\lambda)x, -\lambda x): x \in \frac{1}{\lambda^2 + (1-\lambda)^2} \cdot \conv((1-\lambda)K \cup -\lambda K)
	\right\}. \]
	As above, the product of the volumes of these sets, which is simply 
	\[ \vol( \conv((1-\lambda)K \cup -\lambda K))\vol(K),\]
	is bounded above by $\binom{2n}{n}\vol(C) = \vol(K)$. Note that in this proof, all the sets of the form ${(1-\lambda)K \cup -\lambda K}$ are realized as projections of a single fixed body, namely $C$. 
	\end{rem}

\printbibliography

@article{agjv2016,
title = {Rogers–{S}hephard inequality for log-concave functions},
journal = {Journal of Functional Analysis},
volume = {271},
number = {11},
pages = {3269-3299},
year = {2016},
issn = {0022-1236},
doi = {https://doi.org/10.1016/j.jfa.2016.09.005},
url = {https://www.sciencedirect.com/science/article/pii/S0022123616302610},
author = {David Alonso-Gutiérrez and Bernardo {González Merino} and C. Hugo Jiménez and Rafael Villa},
}

@book{AGA,
	title={{Asymptotic} {Geometric} {Analysis}, Part I},
	author={Artstein-Avidan, Shiri and Giannopoulos, Apostolos and Milman, Vitali},
	series={Mathematical Surveys and Monographs},
	volume={202},
	year={2015},
	publisher={American Mathematical Society},
address = {Providence, RI},
	}

@Article{AEFO2015,
author="Artstein-Avidan, Shiri
and Einhorn, Keshet
and Florentin, Dan I.
and Ostrover, Yaron",
title="On {G}odbersen's conjecture",
journal="Geometriae Dedicata",
year="2015",
volume="178",
number="1",
pages="337--350",
}

@article{ASS2023,
title = "Geometric inequalities for anti-blocking bodies",
author = "Shiri Artstein-Avidan and Shay Sadovsky and Raman Sanyal",
year = "2023",
doi = "10.1142/S0219199721501133",
volume = "25",
journal = "Communications in Contemporary Mathematics",
issn = "0219-1997",
publisher = "World Scientific Publishing Co. Pte Ltd",
number = "3",
pages="2150113"

}

@phdthesis{godbersen1938satz,
  title={Der Satz vom Vektorbereich in Raumen beliebiger Dimension},
  author={Godbersen, Claus},
  year={1938},
  school={Georg-August-Universität zu Göttingen}
}

@Article{Hajnal1974,
author="Hajnal, A.
and Makai, E.",
title="Research problems",
journal="Periodica Mathematica Hungarica",
year="1974",
volume="5",
number="4",
pages="353--354",
issn="1588-2829",
doi="10.1007/BF02018191",
url="https://doi.org/10.1007/BF02018191"
}

@article{milman2000,
title = {Entropy and Asymptotic Geometry of Non-Symmetric Convex Bodies},
journal = {Advances in Mathematics},
volume = {152},
number = {2},
pages = {314-335},
year = {2000},
issn = {0001-8708},
doi = {https://doi.org/10.1006/aima.1999.1903},
url = {https://www.sciencedirect.com/science/article/pii/S0001870899919035},
author = {V.D. Milman and A. Pajor},
abstract = {We extend to the general, not necessarily centrally symmetric setting a number of basic results of local theory which were known before for centrally symmetric bodies and were using very essentially the symmetry in their proofs. Some of these extensions look surprising. The main additional tool is a study of volume behavior around the centroid of the body.}
}

@article{rogers1957difference,
  title={The difference body of a convex body},
  author={Rogers, Claude A and Shephard, Geoffrey C},
  journal={Archiv der Mathematik},
  volume={8},
  number={3},
  pages={220--233},
  year={1957},
  publisher={Springer}
}

@article{rogers1958convex,
  title={Convex bodies associated with a given convex body},
  author={Rogers, C Ambrose and Shephard, Geoffrey C},
  journal={Journal of the London Mathematical Society},
  volume={1},
  number={3},
  pages={270--281},
  year={1958},
  publisher={Oxford University Press}
}

@misc{sadovsky2024,
      title={Godbersen's conjecture for locally anti-blocking bodies}, 
      author={Shay Sadovsky},
      eprint={2312.03473},
      archivePrefix={arXiv},
      primaryClass={math.MG},
      url={https://arxiv.org/abs/2312.03473},
      note={arXiv:2312.03473}
}

@book{schneider2014book,
    author={Schneider, Rolf},
    title={Convex {B}odies: the {B}runn-{M}inkowski {T}heory},
    edition={2nd expanded},
    series={Encyclopedia of {M}athematics and its {A}pplications},
    volume={151},
    year={2014},
	publisher={Cambridge University Press, Cambridge},
    address = {Cambridge, UK},
    }

@article{schneider2009stability,
  title={Stability for some extremal properties of the simplex},
  author={Schneider, Rolf},
  journal={Journal of Geometry},
  volume={96},
  number={1},
  pages={135--148},
  year={2009}
}

@book{SWstochastic,
  title={Stochastic and integral geometry},
  author={Schneider, Rolf and Weil, Wolfgang},
  year={2008},
  publisher={Springer},
  address = {Berlin-Heidelberg}

}

{\small
\noindent S. Artstein-Avidan and E. Putterman,
\vskip 2pt
\noindent School of Mathematical Sciences, Tel Aviv University, Ramat
Aviv, Tel Aviv, 69978, Israel.\vskip 2pt
\noindent Email address: shiri@tauex.tau.ac.il, putterman@mail.tau.ac.il}

\end{document}